\documentclass[12pt,leqno]{amsart}
\usepackage{amsmath,amssymb,amsfonts}
\usepackage{eucal,enumerate}
\usepackage{graphicx,psfrag}
\usepackage[utf8]{inputenc}	

\newtheorem{theorem}{Theorem}
\newtheorem{proposition}[theorem]{Proposition}

\newtheorem{lemma}[theorem]{Lemma}

\theoremstyle{definition}

\newtheorem{definition}{Definition}

\renewcommand{\phi}{\varphi}                 
\renewcommand{\epsilon}{\varepsilon}

\newcommand\eset{\varnothing}

\newcommand\cB{\mathcal{B}}
\newcommand\cF{\mathcal{F}}
\newcommand\cN{\mathcal{N}}
\newcommand\bbF{\mathbb{F}}

\begin{document}
\thispagestyle{empty}

\title{On strongly regular signed graphs\\ of higher girth}
\author{Quaid Iqbal}
\address{Department of Mathematics and Statistics,  Binghamton University, Binghamton, NY 13902-6000, U.S.A.}
\email{qiqbal@math.binghamton.edu}
\author{Thomas Zaslavsky}
\address{Department of Mathematics and Statistics,  Binghamton University, Binghamton, NY 13902-6000, U.S.A.}
\email{zaslav@math.binghamton.edu}

\begin{abstract}
Strongly regular signed graphs are an extension of strongly regular graphs to the realm of signed graphs, that is, graphs where each edge is positive or negative.  
Unlike with ordinary strongly regular graphs, most kinds of signed counterparts with girth 4 or higher are describable in terms of known structures.  We prove that those with girth 4 that are bipartite are classified by designs of two kinds: weighing matrix designs and symmetric block designs.  Those of girth 5 are few and readily described.  There are none of higher girth.  Those with girth 4 that are not bipartite are unsolved.
\end{abstract}

\keywords{Strongly regular signed graph; Hadamard matrix; weighing matrix; symmetric block design}

\subjclass[2010] {Primary 05E30; Secondary 05C22, 05C75}

\maketitle

\section{Introduction}

Strongly regular graphs are a lively part of graph theory for their applications to design theory, group theory, and geometry, but they are difficult; e.g., there seems no hope of classifying them.  Signed graphs add structure to graphs by putting a positive or negative sign on every edge, with ordinary graphs corresponding to signed graphs with only one sign (usually positive).  The problem presented itself of finding a generalization of strongly regular graphs to signed graphs.  Stani\'c \cite{S} proposed a good definition that has led to an increasing literature.

\begin{definition}\label{srsgdef}
A \emph{signed graph} $\Sigma$ consists of an underlying simple graph $\Gamma = (V,E)$ and a sign function $\sigma: E \to \{+1,-1\}$.  It is \emph{homogeneous} if $\sigma$ is a constant function.  The \emph{positive subgraph} $\Sigma^+$ consists of all vertices and only the positive edges; the \emph{negative subgraph} $\Sigma^-$ is similar.  
The sign of a walk is the product of the signs of its edges (counting multiplicity).  It is \emph{strongly regular} if it is regular (that is, $\Gamma$ is regular), it is neither homogeneous and incomplete nor totally disconnected, and there exist constants $a, b, c$ such that $w_2^\pm(u,v) = a$ for every positive edge $uv$, $b$ for every negative edge $uv$, and $c$ for every nonedge $uv$, where $u,v$ denote distinct vertices and $w_2^\pm(u,v)$ means the number $w_2^+(u,v)$ of positive 2-paths $P_{uv}$ less the number $w_2^-(u,v)$ of negative 2-paths.
\end{definition}

In this article we do something that has not yet been possible for ordinary strongly regular graphs:  we give a complete description of all nontrivial strongly regular signed graphs that are bipartite or that have girth 5.  Nontriviality means we exclude those with constant sign, which are essentially ordinary strongly regular graphs.  The main part of the classification is for bipartite graphs of girth 4, which shows they are fully described by weighing matrices (if $c = 0$) and symmetric balanced incomplete block designs (if $c \neq 0$).  
Most of this was proved by Stani\'c in \cite[Section 5]{S} using eigenvalues, but also assuming that both $\Sigma^+$ and $\Sigma^-$ are regular (which he stated, equivalently, as net regularity when $c \neq 0$).  
Our bipartite proof is purely combinatorial and does not assume net regularity; thus our proof shows that all examples not from weighing matrices are the ones found by Stani\'c.  Our proof for girth 5 is also purely combinatorial.  Higher girth than that is impossible.

The definition of a strongly regular signed graph (SRSG) can be restated in terms of adjacency matrices.  Letting $A(\Gamma) = (a_{uv})_{u,v\in V}$ be that of the underlying graph, that of the signed graph is $A(\Sigma) = (\sigma(uv)a_{uv})_{u,v\in V}$.  Then the condition for strong regularity is that 
\begin{equation*}
A(\Sigma)^2 = \frac{a}{2}(A(\Sigma) + A(\Gamma)) - \frac{b}{2}(A(\Sigma) - A(\Gamma)) + cA(\bar\Gamma) + rI,
\label{srsg-matrix}
\end{equation*}
where $\bar\Gamma$ is the complementary graph and $r$ is the degree of $\Sigma$.  When the graph has girth greater than 3, as in this paper, this equation becomes
\begin{equation}
A(\Sigma)^2 = cA(\bar\Gamma) + rI.
\label{bsrsg-matrix}
\end{equation}

An important operation on a signed graph is switching.  \emph{Switching} a vertex set means reversing the signs of all edges between a vertex subset $X$ and its complement.  In terms of the adjacency matrix, this replaces $A(\Sigma)$ by $A(\Sigma^X) := S_{X} A(\Sigma) S_{X}$ where $S_{X}$ is the $V \times V$ diagonal matrix with $s_{vv} = -1$ if $v \in X$ and $+1$ if $v \notin X$.  A switching is \emph{trivial} if it does not change edge signs. 
Switching preserves the adjacency spectrum but, usually, not strong regularity.

We do not find the strongly regular signed graphs that have girth 4 but are not bipartite.  That seems to be difficult.  We conjecture that they are homogeneously signed; we can prove that for the small Clebsch graph.

\section{Girth $4$, bipartite}

We consider a strongly regular signed graph that is bipartite, with color classes $U = \{u_1,\dots,u_n\}$ and $W = \{w_1,\dots,w_n\}$.  (Regularity implies that $|U|=|W|$.)  The adjacency matrix of such a signed graph has the form $A = \left(\begin{smallmatrix} O & B \\ B^T &O \end{smallmatrix}\right)$, where $B$ is an $n \times n$ matrix.  We call $B$ the \emph{bipartite adjacency matrix}.  The matrix $J$ is the $n \times n$ all-1's matrix.

The positive, or negative, neighborhood of a vertex $v$ is the set of vertices that are positively, or negatively, adjacent to it and is denoted by $N^+(v)$, or $N^-(v)$, respectively.

The significant bipartite signed graphs we encounter are of three kinds.  

\begin{enumerate}

\item \emph{Bipartite weighing signed graphs}, whose bipartite adjacency matrix $B$ is a weighing matrix.  A \emph{weighing matrix} is a square matrix $W$ whose elements are in the set $\{0,\pm1\}$, such that $WW^T = rI$ for some integer $r$ (which is the number of nonzeroes in each row and column).  The underlying graph in this type need not be complete bipartite.

\item[(1a)] \emph{Bipartite Hadamard signed graphs}, whose bipartite adjacency matrix $B$ is a Hadamard matrix.  This is the special case of a weighing matrix where the underlying graph is complete bipartite (that is, $r=n$).

\item \emph{Symmetric-design signed graphs}, for which $B = J-2S$ where $S$ is the incidence matrix of a symmetric 2-design.  More explicitly, $|\Sigma| = K_{n,n}$ and the sets $N^-(u_i)$, $i = 1,\dots, n$ [and equivalently, the sets $N^-(v_i)$, $i = 1,\dots, n$] are the blocks of a symmetric design on $n$ elements.  

\item \emph{Perfect-matching signed graphs}, for which $B = J - 2M$ where $M$ is the incidence matrix of a perfect matching (and by relabelling, $B = J - 2I$).  We regard these as trivial symmetric-design signed graphs, where the block size is 1 and $\lambda=0$.

\end{enumerate}

We remark that a Hadamard matrix of order $n$ gives rise to a symmetric design on $n-1$ elements; this implies that a bipartite Hadamard signed graph gives rise to a symmetric-design signed graph by (after appropriate switching) deleting a vertex incident to only positive edges in each of $U$ and $W$.  Conversely, a Hadamard-design signed graph on $2(n-1)$ vertices gives a bipartite Hadamard signed graph by adding one vertex to each of $U$ and $W$ that are each positively adjacent to every vertex in the opposite set.  However, the bipartite Hadamard signed graph is not itself a symmetric design signed graph.

\begin{theorem}\label{structure4}
A bipartite signed graph $\Sigma$ with girth $4$ and $|U|, |W| \geq 2$ is strongly regular if and only if it is a bipartite weighing signed graph, a symmetric-design signed graph, a perfect-matching signed graph, or a bipartite strongly regular graph of girth $4$ with homogeneous signs.
\end{theorem}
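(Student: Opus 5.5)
We translate strong regularity into matrix conditions on the bipartite adjacency matrix $B$ and classify the solutions. Since $\Sigma$ is bipartite with girth $4$, the matrix equation \eqref{bsrsg-matrix} holds: $A^2 = cA(\bar\Gamma) + rI$. Here $A^2$ has diagonal blocks $BB^T$ and $B^TB$ and zero off-diagonal blocks. The complement $\bar\Gamma$ contains all pairs inside $U$ and inside $W$, together with the nonadjacent pairs between $U$ and $W$. Comparing off-diagonal blocks, if some pair $u\in U$, $w\in W$ is nonadjacent, we get $0=c$. Comparing diagonal blocks gives $BB^T = B^TB = c(J-I)+rI$.

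\textbf{Case 1: $\Gamma\ne K_{n,n}$.} Then $c=0$ and $BB^T=rI$, so $B$ is a weighing matrix and $\Sigma$ is a bipartite weighing signed graph.

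\textbf{Case 2: $\Gamma=K_{n,n}$.} Then $r=n$, $B$ is a $\pm1$ matrix, and $BB^T=B^TB=(n-c)I+cJ$.
\begin{itemize}
\item If $c=0$, then $B$ is Hadamard, which is again the weighing case.
\item If $\Sigma$ is homogeneous, then $\Gamma=K_{n,n}$ and we are in the homogeneous strongly regular case. (With $|U|,|W|\ge2$ the graph $K_{n,n}$ does have girth $4$.)
\end{itemize}

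Now suppose $c\neq 0$ and write $B=J-2S$, with $S$ the $0/1$ incidence matrix of the negative edges. Let $k_i$ be the row sums of $S$ (the negative degrees $|N^-(u_i)|$) and $k'_j$ its column sums. Then
\[
BB^T = nJ - 2(\mathbf{k}\mathbf{1}^T + \mathbf{1}\mathbf{k}^T) + 4SS^T,
\]
where $\mathbf{k}=(k_i)$. The diagonal entries give $n - 4k_i + 4k_i = n$, which holds automatically. The off-diagonal $(i,j)$ entries give
\[
n - 2k_i - 2k_j + 4|N^-(u_i)\cap N^-(u_j)| = c.
\]
This alone does not force the $k_i$ to be constant, and this is the main obstacle: net regularity is not assumed.

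To handle it, use the nonsingularity of $BB^T=(n-c)I+cJ$. Its eigenvalues are $n-c$ and $n+c(n-1)$, and since $|c|\le n$ we have $c\ne n$. So $B$ is invertible and $B^TB=BB^T$ follows automatically; in particular $B$ is normal. Next compare row and column sums. Let $\mathbf{s}=B\mathbf{1}$ and $\mathbf{t}=B^T\mathbf{1}$. Then $B^TB\mathbf{1}=(n-c+cn)\mathbf{1}$, so $B^T\mathbf{s}=\lambda\mathbf{1}$ with $\lambda=n+c(n-1)$, and similarly $B\mathbf{t}=\lambda\mathbf{1}$. Also $\mathbf{s}^T\mathbf{s}=\mathbf{1}^TBB^T\mathbf{1}=n\lambda$.

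Here I would attempt to force $\mathbf{s}$ to be constant. The key step is to interpret the $(i,j)$ equation above as saying that each sum $k_i+k_j$ is determined modulo $2$ by the intersection sizes. Better: consider switching. Switching $u_i$ negates row $i$ of $B$, replacing $k_i$ by $n-k_i$, and it preserves $BB^T$ only up to sign changes in the off-diagonal entries. Since $c\ne0$ and all off-diagonal entries must equal the same $c$, any switching of a single vertex is excluded.

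A direct route is the following. For a fixed $i$, summing the $(i,j)$ equation over $j\ne i$ gives a linear relation between $k_i$, $\sum_j k_j$, and $\sum_j|N^-(u_i)\cap N^-(u_j)|=\sum_{w\in N^-(u_i)} k'_w - k_i$. Doing the same for columns gives a symmetric linear system. Combining it with $\mathbf{s}^T\mathbf{s}=n\lambda$ and Cauchy–Schwarz, $(\mathbf{1}^T\mathbf{s})^2\le n\,\mathbf{s}^T\mathbf{s}$, I expect equality to be forced, which makes $\mathbf{s}$ constant and therefore all $k_i$ equal to some $k$. Once $k$ is constant, the $(i,j)$ equation gives $|N^-(u_i)\cap N^-(u_j)|=(c-n+4k)/4$, a constant $\lambda'$. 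So $S$ is the incidence matrix of a symmetric $2$-design, with $S^TS$ handled identically.

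It remains to deal with degenerate parameters. If $\lambda'=0$ and $k=1$, then $S$ is a permutation matrix, giving the perfect-matching case. If $k=0$ or $k=n$, then $\Sigma$ is homogeneous. If $k=n-1$, the complement design reduces, after switching, to the perfect-matching case.

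For the converse, each family satisfies $BB^T=B^TB=(n-c)I+cJ$ by a direct computation:
\begin{itemize}
\item for a weighing matrix, with $c=0$;
\item for a symmetric design with parameters $(n,k,\lambda')$, with $c=n-4k+4\lambda'$;
\item for $J-2I$, with $c=n-4$.
\end{itemize}
Hence each is strongly regular. The non-homogeneity condition in the definition holds by construction, except in the homogeneous case, which is listed separately.
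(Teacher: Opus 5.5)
Your reduction is sound up to the point you flag as the main obstacle, but there is a genuine gap there: nothing you write proves that the negative degrees $k_i$ are constant. Summing the $(i,j)$ equations over $j$ only reproduces the vector identities $B\mathbf{t}=\lambda\mathbf{1}$ and $B^T\mathbf{s}=\lambda\mathbf{1}$. You give no argument that these, together with $\mathbf{s}^T\mathbf{s}=n\lambda$, force equality in Cauchy--Schwarz. ``I expect equality to be forced'' is a hope, and the constancy of $\lambda'$, the design structure and the degenerate-case analysis all rest on it.

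A false remark hides where the real information lies. Invertibility of $B$ does \emph{not} make $B^TB=BB^T$ automatic. Negating one column of $J-2S$, with $S$ a nontrivial symmetric design, leaves $BB^T=(n-c)I+cJ$ unchanged. But it turns the off-diagonal entries in one row and column of $B^TB$ into $-c$, and it makes the row sums nonconstant. So the $W$-block equation $B^TB=(n-c)I+cJ$ is genuinely extra information, and it is exactly what forces net regularity. Also, $|c|\le n$ does not exclude $c=n$; that value occurs precisely for $B=\pm J$ and belongs to the homogeneous case.

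The missing step takes one line once both block equations are used. With $M=(n-c)I+cJ$,
\[
BM=B(B^TB)=(BB^T)B=MB,
\]
so $c(BJ-JB)=O$, and since $c\neq 0$, $BJ=JB$. Entry $(i,j)$ of $BJ$ is the $i$th row sum of $B$, and entry $(i,j)$ of $JB$ is the $j$th column sum. Hence all row and column sums coincide, giving $k_i=k'_j=k$, and from there your argument finishes correctly.

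The paper reaches the same point by a different route. It notes that the sets $N^-(u_i)$ form an equidistant family with distance $\frac12(n-c)\neq n/2$. It then cites Proposition~\ref{is}: such a family becomes the block set of a symmetric design after switching some vertices of $W$. Finally it applies Lemma~\ref{sw} (nontrivial switching destroys strong regularity when $c\neq 0$, which is where the $W$-block enters) to conclude that no switching was needed. Once repaired, your version is self-contained and avoids the coding-theory citation.
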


\begin{lemma}[Switching Lemma]\label{sw}
Nontrivial switching destroys strong regularity if $c \neq 0$ and preserves it (and the value of $c$) if $c=0$.
\end{lemma}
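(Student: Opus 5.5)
The plan is to work directly with the matrix form of strong regularity. In the setting of this section the underlying graph is triangle-free, so the defining identity is \eqref{bsrsg-matrix}: $A(\Sigma)^2 = cA(\bar\Gamma) + rI$. Switching by $X$ replaces $A=A(\Sigma)$ by $A' = S_XAS_X$. Since $S_X^2=I$, we get
\[
A'^2 = S_X A^2 S_X = c\,S_X A(\bar\Gamma) S_X + rI .
\]
Switching does not change the underlying graph $\Gamma$. So $\Gamma$ is still regular and triangle-free, and $\Sigma^X$ is strongly regular exactly when $A'^2 = c'A(\bar\Gamma)+rI$ for some constant $c'$. The whole question therefore reduces to comparing $c\,S_XA(\bar\Gamma)S_X$ with a constant multiple of $A(\bar\Gamma)$, entry by entry on the nonedges.

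First take $c=0$. Then $A'^2=rI$ immediately. The switched graph is not totally disconnected, and it is still nonhomogeneous or complete in the sense of Definition~\ref{srsgdef} (this needs only a brief check). So $\Sigma^X$ is strongly regular with the same value $c=0$.

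Now take $c\neq0$. For a nonedge $uv$, the $(u,v)$ entry of $c\,S_XA(\bar\Gamma)S_X$ is $c\,s_{uu}s_{vv}$. This equals $-c$ if exactly one of $u,v$ lies in $X$, and $c$ otherwise. Strong regularity of $\Sigma^X$ therefore forces the product $s_{uu}s_{vv}$ to be constant over all nonedges. I will analyse the two possible constants separately.

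If the constant is $-1$, every nonedge crosses between $X$ and $V\setminus X$. Then $X$ and its complement induce cliques, and triangle-freeness forces each to have at most two vertices. This leaves only tiny graphs, which can be checked by hand; I expect them to be excluded by regularity or by the conditions in the definition.

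If the constant is $+1$, every nonedge lies inside $X$ or inside $V\setminus X$, so every cross pair is an edge. Triangle-freeness then forces $X$ and $V\setminus X$ to be independent sets. Hence $\Gamma=K_{m,m}$ with color classes $X$ and $V\setminus X$, and the switching negates every edge.

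The main obstacle is this last case. Here $(-A)^2=A^2$, so $-\Sigma$ is again strongly regular with the same $c$. The lemma can therefore hold only if this whole-graph negation is regarded as trivial. Equivalently, $\Sigma$ and $-\Sigma$ are identified, just as a switching that changes no sign is identified with the identity. I will state that convention explicitly. With it, any other nontrivial switching produces nonedges with both values of $s_{uu}s_{vv}$. These give entries $c$ and $-c\neq c$ on nonedges of $A'^2$, so no constant $c'$ exists and strong regularity is destroyed.
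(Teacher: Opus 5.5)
Your argument is the paper's: conjugate \eqref{bsrsg-matrix} by $S_X$ and ask when $c\,S_XA(\bar\Gamma)S_X$ is a constant multiple of $A(\bar\Gamma)$. The paper does this blockwise and takes $\Gamma=K_{n,n}$ for granted, which is justified for $c\neq0$ only by the distance argument in Case~2 of Theorem~\ref{structure4}. You instead derive the structure from the constancy of $s_{uu}s_{vv}$ on nonedges, so your argument is self-contained.

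On the crucial $+1$ case you are right and the paper is not. Its last step says that $X\cap U\in\{\eset,U\}$ and $X\cap W\in\{\eset,W\}$ ``implies that no edge signs were changed.'' But $X=U$ (equivalently $X=W$) negates every edge, and $\Sigma^U=-\Sigma$ is strongly regular with the same $c$ because $(-A)^2=A^2$. So every symmetric-design signed graph with $c\neq0$ is a counterexample to the literal statement, and your convention (or the phrase ``other than negating all edges'') is genuinely needed. Theorem~\ref{structure4} survives: its conclusion becomes $\Sigma=\pm\Sigma_D$, and $-(J-2S)=J-2(J-S)$ is the signed graph of the complementary design.

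The two checks you defer do work, but they need girth $4$ (so $r\ge2$), not just triangle-freeness.
\begin{enumerate}
\item For $c=0$, a homogeneous $\Sigma^X$ would force $A(\Gamma)^2=rI$, which is impossible when $r\ge2$. With $r=1$ the claim really fails: an inhomogeneous signed perfect matching switches to a homogeneous incomplete graph.
\item In the $-1$ case the only graph is $C_4$ with $X$ an adjacent pair. Switching the alternating $C_4$ there gives a homogeneous $C_4$, which Definition~\ref{srsgdef} excludes.
\end{enumerate}
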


\begin{proof}
We apply switching to Equation \eqref{bsrsg-matrix}.  We may assume the graph is connected.  The switching matrix of $A = \left(\begin{smallmatrix} O & B \\ B^T &O \end{smallmatrix}\right)$ has the form $S = \left(\begin{smallmatrix} S_{X \cap U} &O \\ O &S_{X \cap W} \end{smallmatrix}\right)$, so 
$$
S A S = \left(\begin{matrix} O &S_{X \cap U} B S_{X \cap W} \\[6pt] S_{X \cap W} B^T S_{X \cap U} &O \end{matrix}\right).
$$
The switched Equation \eqref{srsg-matrix} is
$$
A(\Sigma^X)^2 = c \begin{pmatrix} S_{X \cap U} J S_{X \cap U} & O \\[6pt] O & S_{X \cap W} J S_{X \cap W} \end{pmatrix} + rI,
$$
since $\bar\Gamma$ is the complement of $K_{n,n}$.  For the switched graph $\Sigma^X$ to be strongly regular, it must still satisfy Equation \eqref{srsg-matrix}; thus,  
$$
A(\Sigma^X)^2 = c' \begin{pmatrix} J & O \\ O & J \end{pmatrix} + rI,
$$
where $c'$ is a constant.  Combining these equations,
$$
c \begin{pmatrix} S_{X \cap U} J S_{X \cap U} & O \\[6pt] O & S_{X \cap W} J S_{X \cap W} \end{pmatrix} = c' \begin{pmatrix} J & O \\[6pt] O & J \end{pmatrix}.
$$
There are two possible solutions.  One is that $c'=c=0$.  The other is that $S_{X \cap U} J S_{X \cap U} = J = S_{X \cap W} J S_{X \cap W}$.  The only solutions to the latter are that $X \cap U = \eset$ or $U$, and $X \cap W = \eset$ or $W$, which implies that no edge signs were changed.  Thus, if $c \neq 0$, nontrivial switching results in a signed graph that is not strongly regular.

Since $c=0$ implies $c'=c=0$, the value of $c$ is preserved.
\end{proof}

\begin{proof}[Proof of Theorem \ref{structure4}]
We assume $n>2$ in the first two cases and deal with tiny $n$ in Case 3.

\textbf{Case 1.}  $c=0$.  Then the bipartite adjacency matrix $B$ is a weighing matrix because $A^2 = \left(\begin{smallmatrix} BB^T & O \\ O & B^TB \end{smallmatrix}\right) = rI$, so $BB^T = B^TB = rI$, which for a matrix with entries $0, \pm1$ is the definition of a weighing matrix.

Conversely, if $B$ is a weighing matrix, then $A^2 = \left(\begin{smallmatrix} BB^T & O \\ O & B^TB \end{smallmatrix}\right) = rI$, which satisfies Equation \eqref{bsrsg-matrix}.

\textbf{Case 2.}  $c \neq 0$.

We first prove that $G = K_{n,n}$.  Because $w_2^\pm = c \neq 0$, no two vertices are at distance greater than 2.  Therefore $G$ is complete bipartite and $r=n$.

Now we study the negative neighborhoods of vertices in $\Sigma$.  We write $U = \{u_1,u_2,\dots,u_n\}$ and $W = \{w_1,w_2,\dots,w_n\}$ for the two color classes of the $K_{n,n}$.  For any two vertices $u_i, u_j \in U$, we have $w_2^+(u_i, u_j) + w_2^-(u_i, u_j) = n$ and $w_2^+(u_i, u_j) - w_2^-(u_i, u_j) = c$, so $w_2^+(u_i, u_j) = \frac12(n+c)$ and $w_2^-(u_i, u_j) = \frac12(n-c)$, independent of the choice of vertices.  
Observe that $w_2^-(u_i, u_j) = | N^-(u_i) \oplus N^-(u_j) |$; thus, the symmetric difference of any two negative neighborhoods of vertices in $U$ has constant size.

We pause to verify that a symmetric-design signed graph is strongly regular.  Let $\Sigma_D$ be such a graph (with the same vertices as $\Sigma$), whose design $D$ has parameters $(n,k,\lambda)$. 
Then $k = | N^-(u_i) |$ for all $i$ and $\lambda = | N^-(u_i) \cap N^-(u_j) |$ for $i \neq j$.  Thus, $| N^-(u_i) \oplus N^-(u_j) | = w_2^-(u_i,u_j) = 2(k-\lambda)$ and $c_D = w_2^\pm(u_i,u_j) = n - 4(k-\lambda)$.  This establishes that $\Sigma_D$ is strongly regular.  

Now we apply a fact from coding theory.  A family of vectors in $\bbF_2^n$ is called \emph{equidistant} if for every pair in the set, the sum has the same number of nonzero components.  A set-theoretic interpretation is that a family $\cN$ of subsets of a set $X$ is \emph{equidistant} if there is a number $d$ such that every pair of sets in $\cN$ has symmetric difference of size $d$.  There is a remarkable theorem.  For $X \subseteq U$, define $\cF \oplus X := \{ A \oplus X : A \in \cF \}$; we call this operation on $\cF$ \emph{set switching}.

\begin{proposition}[{See \cite[Theorem 4.7.5]{CSD}}] \label{is}
Let $X$ be a set of size $n\geq3$ and let $\cN$ be a family of $n$ subsets of $X$ such that $|A \oplus B| = d$, a constant such that $d \neq n/2$, for every pair of sets $A, B \in \cB$.  Then $\cN$ set-switches to the family of blocks of a symmetric block design.
\end{proposition}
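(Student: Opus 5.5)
The plan is to translate the set family into a square $\pm1$ matrix, use the equidistance hypothesis to control its Gram matrix, and then read off the set switching from the column sums of that matrix.

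\textbf{Setup.} For $A\in\cN$ put $v_A=\mathbf 1-2\chi_A\in\{\pm1\}^n$, and let $V$ be the $n\times n$ matrix with these rows. Then $v_A\cdot v_B=n-2|A\oplus B|$. The hypothesis therefore gives
$$VV^T=aI+bJ,\qquad a=2d,\; b=n-2d\neq 0.$$
Set switching $\cN\mapsto\cN\oplus X$ is the same as $V\mapsto VD_X$, where $D_X$ is the diagonal $\pm1$ matrix with $-1$ exactly on $X$. This leaves $VV^T$ unchanged. I assume $d>0$, i.e.\ the sets are distinct; otherwise the statement is degenerate.

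\textbf{Invertibility and the column sums of $V$.}
\begin{enumerate}
\item The eigenvalues of $aI+bJ$ are $a=2d>0$ and $a+bn=2d+n(n-2d)$. The latter vanishes only if $2d(n-1)=n^2$. Since $\gcd(n-1,n)=1$, this forces $n-1\mid 2$, and with $n\ge3$ we get $n=3$, $d=9/4$, which is not an integer. Hence $V$ is invertible.
\item From $V^{-1}=V^T(aI+bJ)^{-1}=V^T\bigl(\tfrac1a I-\tfrac{b}{a(a+bn)}J\bigr)$ we obtain
$$V^TV=aI+\beta\,xx^T,\qquad x=V^T\mathbf 1,\quad \beta=\frac{b}{a+bn}\neq0.$$
\item Compare diagonal entries: $n=a+\beta x_i^2$. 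Since $d\neq n/2$, we have $n-a\neq0$, so $x_i^2=(n-a)/\beta$ is a nonzero constant. Thus every column sum of $V$ has the same absolute value $s>0$.
\end{enumerate}
This step is the heart of the argument and the only place where $d\ne n/2$ enters.

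\textbf{Switching and normalizing.}
\begin{enumerate}
\item Let $X$ be the set of coordinates whose column sum is negative, and put $V'=VD_X$. Every column sum of $V'$ is now $s$, i.e.\ $V'^T\mathbf 1=s\mathbf 1$.
\item Since $V'V'^T=aI+bJ$, we get $(a+bn)\mathbf 1=V'V'^T\mathbf 1=sV'\mathbf 1$. So the row sums of $V'$ are also constant.
\item Write $V'=J-2S$ with $S$ a $0/1$ matrix; $S$ is the incidence matrix of $\cN\oplus X$. Then $S$ has constant row and column sums $k$. Expanding $V'V'^T=nJ-4kJ+4SS^T$ gives
$$SS^T=\tfrac a4 I+\tfrac{b-n+4k}{4}J,$$
so any two blocks meet in a constant number $\lambda$ of points.
\end{enumerate}
Hence $\cN\oplus X$ is the block family of a symmetric design.

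The remaining bookkeeping is routine. If the resulting design is degenerate (for instance $k$ close to $n$), further switching by all of $U$ complements every block and preserves all of the above. I expect the main obstacle to be step 3 of the second list, the step showing that the column sums all have equal absolute value.
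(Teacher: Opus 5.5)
The paper does not prove this proposition; it only quotes it as Theorem 4.7.5 of \cite{CSD}. Your argument is a correct, self-contained Ryser-style matrix proof, so it genuinely supplies something the paper does not.

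Your chain of steps checks out:
\begin{enumerate}
\item $V$ is invertible, using $n\ge 3$ and $d>0$.
\item Invertibility turns $VV^T=aI+bJ$ into $V^TV=aI+\beta xx^T$.
\item The diagonal of $V^TV$ forces $x_i^2=a+bn=n^2-2d(n-1)$ for every column.
\item Switching the negative columns makes every column sum equal to $s$.
\item The identity $V'V'^T\mathbf{1}=(a+bn)\mathbf{1}$ then makes every row sum equal to $s$.
\end{enumerate}

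Compared with the citation, your proof buys transparency. It shows exactly where each hypothesis enters: $n\ge3$ for invertibility, $d\neq n/2$ through $b\neq0$, and distinctness of the sets through $d>0$. Distinctness is only implicit in the paper's statement, yet in the paper's application $d=\frac12(n-c)$ vanishes when $c=n$. Your proof also produces the parameters directly: $k-\lambda=d/2$ (so $d$ is even) and $(n-2k)^2=n^2-2d(n-1)$. The citation's advantage is brevity.

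Two small touch-ups are worth making.
\begin{itemize}
\item From $2d(n-1)=n^2$ and $n^2\equiv 1 \pmod{n-1}$ you get $n-1\mid 1$ directly. The detour through $n-1\mid 2$ and $n=3$ is harmless but unnecessary.
\item You proved the block-intersection condition $SS^T=\frac d2 I+\lambda J$, but a symmetric design is usually defined by the point-pair condition. In your setup this is one line: $V'^TV'=D_X(aI+\beta xx^T)D_X=aI+\beta s^2J=aI+bJ$, since $\beta s^2=b$. The same expansion then gives $S^TS=SS^T$.
\end{itemize}

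On notation, you use $X$ both for the ground set and for the switching set. Also, ``all of $U$'' in your last paragraph should read ``the whole ground set''.
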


We apply this proposition to the sets $N_i = N^-(u_i) \subseteq W$.  Here $w_2^-(u_i,u_j) = | N_i \oplus N_j | = \frac12(n-c)$ for all $i \neq j$, so the proposition applies with $d = \frac12(n-c) \neq n/2$.  Set switching by $X \subseteq U$ is equivalent to sign switching $U$ in $\Sigma$.  Therefore, Proposition \ref{is} tells us that the sets $N_i$ are the result of set-switching a symmetric design on $U$, thus $\Sigma$ is the result of switching a symmetric-design signed graph.  

By Lemma \ref{sw}, if strongly regular $\Sigma$ is a switching of a symmetric-design graph $\Sigma_D$ with $c_D \neq 0$, then $\Sigma = \Sigma_D$ if $c_D \neq 0$.  But $c_D = 0$ is impossible by the same lemma, since then $c$ would be 0, which is Case 1.  It follows that $\Sigma$ itself is a symmetric-design signed graph.

\textbf{Case 3.}  Suppose $n\leq 2$; then girth 4 implies $n=2$ and the underlying graph is a quadrilateral.  The only possibility that is not homogeneously signed is that adjacent edges have opposite signs.
\end{proof}

We conclude this section with a few remarks about inhomogeneous $\Sigma$ with $c \neq 0$.  Such a $\Sigma$ is constructed from a symmetric 2-design with parameters $(n,k,\lambda)$.
\begin{enumerate}

\item The underlying graph is regular.  The positive and negative subgraphs are also regular of degrees $n-k$ and $k$.

\item If $\Sigma^-$ is not a matching (i.e., the negative degree is $>1$), then $\Sigma^-$ is connected.  

If it were not connected, then $c = w_2^\pm$ would not be well defined, because $w_2^-(u_i,u_j) = |N_i \oplus N_j| = 2k$ for vertices in different components of $\Sigma^-$, since $N_i \cap N_j = \eset$, but for vertices in the same component, $w_2^-(u_i,u_j) = 2(k-\lambda)$, since $|N_i \cap N_j| = \lambda > 0$.  For the same reason $\Sigma^+$ is connected (e.g., by negating the edges of $\Sigma$) unless it is a matching.

\item Negative degree $k=2$ (and by reversing signs, positive degree 2) is impossible because no such symmetric design exists.  The design must satisfy $\lambda(n-1) = k(k-1) = 2$, but then $n\leq3$, which does not admit an inhomogeneous example.

\item Negative degree $k=3$ implies $\lambda(n-1) = 6$.  The design is the Fano plane if $\lambda =1$.  If $\lambda =2$, it is the complement of a perfect matching; i.e., the positive subgraph is a perfect matching.  Any larger $\lambda$ implies $n \leq 3$.

\end{enumerate}

\section{Higher girth}

\begin{proposition}\label{structure6}
A connected strongly regular signed graph must have diameter at most $2$.  It has girth at most $5$, or it is $K_{n,1}$.
\end{proposition}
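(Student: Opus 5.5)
The plan is to read the statement off the defining identity $w_2^\pm(u,v)=c$ for nonedges, using the fact that in a graph with no short cycles two vertices have at most one common neighbour. Throughout, $\Sigma$ is connected and strongly regular with underlying graph $\Gamma$ and degree $r$.

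\textbf{Step 1 (large girth forces $c\neq0$).} Suppose $\Gamma$ has girth at least $5$. Two vertices $u,v$ at distance $2$ then have exactly one common neighbour $x$, since two common neighbours would give a $4$-cycle. So $w_2^\pm(u,v)=\sigma(ux)\sigma(xv)=\pm1$. Because $uv$ is a nonedge, this value equals $c$, so $c=\pm1\neq0$; here we need at least one pair at distance $2$, which holds unless $\Gamma$ is complete. Every pair at distance $\ge3$ is also a nonedge with no $2$-paths, so it would have $w_2^\pm=0\neq c$. Hence no such pair exists, and the diameter is at most $2$.

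\textbf{Step 2 (small girth).} Suppose the girth is at most $4$. If $c\neq0$, the same argument applies: any nonedge $uv$ has $w_2^\pm(u,v)=c\neq0$, so $u$ and $v$ have a common neighbour, and the diameter is at most $2$. This is the argument already used in Case 2 of Theorem~\ref{structure4}.

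The remaining case is $c=0$ with girth at most $4$, and I expect this to be the main obstacle. A pair at distance $3$ is then not excluded by the counting identity. Nor is it obviously excluded at all: a bipartite weighing signed graph is strongly regular with $c=0$ by Theorem~\ref{structure4}, and its underlying graph need only be connected, not of diameter $2$. My first attempt would be to take a shortest path $u\,x\,y\,v$ with $d(u,v)=3$ and apply the matrix equation \eqref{srsg-matrix} to the entries of $A(\Sigma)^3$ for the pair $u,v$. Computing $A(\Sigma)^3=A(\Sigma)\cdot A(\Sigma)^2$ in two ways gives a linear relation in $a,b,r$ and the signed neighbourhood counts, and I would aim to derive a contradiction from it. If this fails, the unconditional diameter claim would have to be weakened to the case $c\neq0$ or girth $\ge5$. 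That weaker version is all that the girth conclusion below uses.

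\textbf{Step 3 (girth at most $5$).} Suppose the girth is at least $6$. By Step 1 the diameter is at most $2$. Every pair of nonadjacent vertices is therefore at distance exactly $2$, and every edge is at distance $1$. If $\Gamma$ had a cycle, a shortest cycle $C$ has length $g\ge6$. Take two vertices of $C$ at distance $3$ along $C$. They are joined in $\Gamma$ by a path of length at most $2$, and combining that path with one of the two arcs of $C$ produces a closed walk containing a cycle of length at most $2+(g-3)<g$, which is impossible. So $\Gamma$ is a tree.

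A tree of diameter at most $2$ is a star $K_{n,1}$. Regularity, together with the exclusion of homogeneous incomplete and totally disconnected cases in Definition~\ref{srsgdef}, leaves only the degenerate stars, which are switching equivalent to all-positive graphs. This gives the alternative ``$K_{n,1}$'' in the statement.
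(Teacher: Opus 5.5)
The only part of the statement your proposal does not prove is the diameter claim when $c=0$ and the girth is at most $4$. That part cannot be proved, because it is false, and your suspicion about bipartite weighing signed graphs is exactly right.

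Take the conference matrix $B=\left(\begin{smallmatrix} 0&1&1&1\\ -1&0&1&-1\\ -1&-1&0&1\\ -1&1&-1&0\end{smallmatrix}\right)$, which satisfies $BB^T=B^TB=3I$. The bipartite signed graph with this bipartite adjacency matrix has $A(\Sigma)^2=3I$. Its underlying graph is $K_{4,4}$ minus a perfect matching, i.e.\ the cube $Q_3$, with every quadrilateral negative. It is connected, $3$-regular and inhomogeneous, with $a=b=c=0$. So it is strongly regular under Definition~\ref{srsgdef}, and it is also covered by Case~1 of Theorem~\ref{structure4}. Yet $u_i$ and $w_i$ are at distance $3$. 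More generally, every connected bipartite weighing signed graph with $r<n$ has diameter at least $3$.

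Your planned $A^3$ computation cannot yield a contradiction. If $d(u,v)=3$, every neighbour $x$ of $u$ is nonadjacent to $v$, so $(A^2)_{xv}=c=0$. Hence both expansions of $(A^3)_{uv}$ are simply $0$.

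The paper's own proof has the same defect. From a path $uxyv$ with $d(u,v)=3$ it asserts $w_2^\pm(u,y)=\pm1$. That holds only if $x$ is the unique common neighbour of $u$ and $y$, which is guaranteed in girth at least $5$. In the cube above, the path $u_1w_2u_3w_1$ gives $w_2^\pm(u_1,u_3)=(BB^T)_{13}=0$.

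So the paper's argument actually establishes exactly your Step~1. Its girth conclusion is reached the same way as your Step~3: a shortest cycle of length at least $6$ contains a pair at distance $3$; otherwise the graph is a tree of diameter at most $2$, hence a star.

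Your weakened formulation is the correct statement:
\begin{enumerate}
\item the diameter is at most $2$ when $c\neq0$ or the girth is at least $5$;
\item the girth is at most $5$ unless the graph is a star.
\end{enumerate}
It covers every later use of the proposition. Lemma~\ref{srsg5} works in girth $5$, and Case~2 of Theorem~\ref{structure4} argues the $c\neq0$ case directly.

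One small remark on Step~3. The two $p$--$q$ paths you combine are distinct because their lengths differ, and that is what guarantees their union contains a cycle. Using the arc of length $3$, that cycle has length at most $5$.
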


\begin{proof}
Suppose $\Sigma$ has two vertices $u,v$ at distance 3.  Let $uxyv$ be a path of length 3 joining those vertices.  Then $w^\pm_2(u,y) = \pm1$ but since there is no path of length 2 joining $u$ and $v$, $w^\pm_2(u,v) = 0$.  This contradicts the definition of a strongly regular signed graph, so girth $g \geq 6$ is impossible.

Suppose $\Sigma$ has girth $g \ge 6$, possibly infinite.  Then it has two vertices $u,v$ at distance 3, which is impossible, or it is a tree with diameter 2, which is $K_{n,1}$.
\end{proof}

We dispose of infinite girth:

\begin{proposition}
A strongly regular signed graph with underlying graph $K_{n,1}$ must be homogeneous, except when $n=2$.
\end{proposition}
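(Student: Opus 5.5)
The plan is a direct sign computation that uses only the constant $c$ attached to nonadjacent pairs. Let the underlying graph be $K_{n,1}$ with center $x$ and leaves $v_1,\dots,v_n$, and write $\sigma_i = \sigma(xv_i)\in\{+1,-1\}$. (Strictly, $K_{n,1}$ is regular only for $n=1$. I will therefore not use the regularity hypothesis and prove the stronger statement using only the $a,b,c$ conditions of Definition~\ref{srsgdef}.)

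First I dispose of the conditions on edges. Because $K_{n,1}$ is a tree, no $2$-path joins adjacent vertices, so $w_2^\pm(u,v)=0$ for every edge. Hence the conditions for positive and negative edges hold automatically with $a=b=0$, and give no information. All the information comes from nonedges. The nonadjacent pairs are exactly the pairs of distinct leaves $v_i,v_j$, and each such pair is joined by the single $2$-path $v_ixv_j$. Strong regularity therefore says exactly that
\[
\sigma_i\sigma_j = c \quad\text{for all } i\neq j,
\]
with $c\in\{+1,-1\}$.

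Next I show that $c=-1$ is impossible once $n\ge 3$. Take three leaves $v_1,v_2,v_3$ and multiply the three relations:
\[
(\sigma_1\sigma_2)(\sigma_2\sigma_3)(\sigma_1\sigma_3)=(\sigma_1\sigma_2\sigma_3)^2=1,
\]
while the left-hand side also equals $c^3=c$. Hence $c=+1$. Then every pair of edge signs agrees, so $\sigma$ is constant and $\Sigma$ is homogeneous. The case $n=1$ is trivial, since a single edge is always homogeneous.

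Finally I treat the exceptional case $n=2$, where the underlying graph is the path $v_1xv_2$. There is only one nonadjacent pair, so the single relation $\sigma_1\sigma_2=c$ imposes no restriction. Opposite signs give $c=-1$, which shows that an inhomogeneous example genuinely occurs. The only step with any content is the parity argument for $n\ge3$; the rest is bookkeeping.
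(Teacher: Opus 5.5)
Your proof is correct and is the same argument as the paper's, which simply says that for $n>2$ homogeneity is needed for $w_2^\pm$ to take one value on all nonadjacent pairs of leaves. You make this explicit through the relations $\sigma_i\sigma_j=c$ and the three-leaf product forcing $c=+1$, and your remark that the regularity hypothesis is not needed (indeed $K_{n,1}$ is irregular for $n>1$) is accurate.
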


\begin{proof}
Homogeneity is needed to make $w_2^\pm$ well defined, if $n>2$.
\end{proof}

We show that a strongly regular signed graph of girth 5 is trivial (that is, from the viewpoint of signed graphs; as they are the Moore graphs, they are not entirely known).

\begin{theorem}\label{structure5}
A connected signed graph $\Sigma$ with girth $5$ is strongly regular if and only if it is a homogeneously signed strongly regular graph of girth $5$.
\end{theorem}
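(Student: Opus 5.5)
Girth 5 means there are no triangles and no quadrilaterals. So any two distinct vertices are joined by at most one 2-path. By Proposition~\ref{structure6} the diameter is at most 2. Hence every nonadjacent pair $u,v$ has exactly one 2-path, and $w_2^\pm(u,v)=\pm1$; adjacent pairs have no 2-path at all. By strong regularity $c$ is constant, so $c\in\{+1,-1\}$. The plan is to show that $c$ fixes the sign of every edge, up to a global negation.

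The key step is a local count at a vertex. Fix $v$ and let $x,y$ be two distinct neighbours of $v$. Then $x,y$ are nonadjacent, since there are no triangles, and $vxy$ is their unique 2-path. So $\sigma(vx)\sigma(vy)=c$ for every such pair.

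If $c=-1$, every pair of edges at $v$ must have opposite signs. This forces the degree $r\le 2$. A connected 2-regular graph of girth 5 is the pentagon $C_5$. There, going around the cycle, consecutive edges would alternate in sign, which is impossible on an odd cycle. So $c=-1$ cannot occur, except that for $r\le1$ there is no graph of girth 5.

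If $c=+1$, all edges at each vertex have the same sign. Since $\Sigma$ is connected, adjacent edges propagate that sign, so $\sigma$ is constant and $\Sigma$ is homogeneous. Its underlying graph then satisfies the ordinary strong regularity conditions with $\lambda=0$ and $\mu=1$, so it is a strongly regular graph of girth 5 (a Moore graph). I must check that the definition's exclusion of homogeneous incomplete graphs is read as the paper intends, namely that these count as the "homogeneously signed strongly regular graphs."

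For the converse, take a homogeneously signed strongly regular graph of girth 5, which has $\lambda=0$ and $\mu=1$. All its 2-paths have the same sign, so $w_2^\pm$ is $0$ on edges and $\pm1$ on nonedges, with the sign constant. The matrix form \eqref{bsrsg-matrix} then holds with $c=\pm1$.

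The only delicate points are the small-degree case ($C_5$, and degree $\le 1$ excluded by girth) and the definitional subtlety about homogeneous signing. The main argument is the local pair-of-neighbours identity $\sigma(vx)\sigma(vy)=c$.
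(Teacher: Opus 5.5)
Your proof is correct, and it takes a somewhat different route from the paper's. Both arguments rest on the same observation. In girth $5$, two nonadjacent vertices with a common neighbour have a unique $2$-path, so its sign equals $c$.

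\textbf{The paper's route.} It applies this observation only to consecutive edges of a pentagon (Lemma~\ref{homoC5}). The odd length of $C_5$ then forces every pentagon to be homogeneous, so implicitly $c=+1$ with no case split. It then spreads the common sign with Lemma~\ref{srg5}, which uses diameter $2$ to put pairs of edges into a common pentagon.

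\textbf{Your route.} You apply the identity $\sigma(vx)\sigma(vy)=c$ to every pair of edges at a vertex. This obliges you to dispose of $c=-1$ separately: pigeonhole when $r\ge 3$, parity on $C_5$ when $r=2$. Alternatively, walking around any shortest cycle, which is a pentagon, kills $c=-1$ uniformly. Connectivity of the line graph then gives homogeneity at once.

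\textbf{What your version buys.}
\begin{itemize}
\item Homogeneity uses only girth $\ge 5$ and connectedness. Proposition~\ref{structure6} is needed only to identify the underlying graph as a strongly regular graph with $\lambda=0$, $\mu=1$.
\item It avoids Lemma~\ref{srg5}, whose statement is too strong. Two disjoint edges with no edge between them lie in no common pentagon, because any two disjoint edges of a $C_5$ are joined by an edge of it. Such pairs occur in the Petersen graph. So the paper's argument also tacitly needs the connectivity propagation that you make explicit.
\end{itemize}

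\textbf{Small slips.}
\begin{itemize}
\item The $2$-path joining $x$ and $y$ is $xvy$, not $vxy$.
\item In the converse, every $2$-path in a homogeneously signed graph is positive, so $c=+1$, not $\pm 1$.
\end{itemize}

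Your reading of the definition, that it excludes homogeneous \emph{complete} graphs, is the one the paper's proof tacitly uses.
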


The proof follows three lemmas.

\begin{lemma}\label{srsg5}
If $\Sigma$ is strongly regular with girth $5$, then its underlying graph is strongly regular with $\lambda=0$ and $\mu=1$.
\end{lemma}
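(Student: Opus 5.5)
The plan is to read off the parameters of the underlying graph $\Gamma$ directly from the girth hypothesis, together with the diameter bound in Proposition~\ref{structure6}. Since $\Sigma$ is strongly regular, $\Gamma$ is regular of some degree $r$. Girth $5$ means there are no triangles, so no two adjacent vertices have a common neighbor; this gives $\lambda=0$.

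The main step is to show that two nonadjacent distinct vertices $u,v$ have exactly one common neighbor.

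\begin{enumerate}
\item Because $\Gamma$ has girth $5$, it has no $4$-cycles. Hence $u$ and $v$ have at most one common neighbor, so there is at most one $2$-path $P_{uv}$.
\item By Proposition~\ref{structure6}, a connected strongly regular signed graph has diameter at most $2$. Therefore $u$ and $v$, being nonadjacent, are at distance exactly $2$, and they have at least one common neighbor.
\item Combining the two, every nonadjacent pair has exactly one common neighbor, so $\mu=1$.
\end{enumerate}

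For the diameter bound to apply, $\Gamma$ must not be $K_{n,1}$. This holds because a tree has infinite girth, not girth $5$.

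It remains to check that $\Gamma$ is a genuine strongly regular graph, that is, neither complete nor edgeless. A graph of girth $5$ contains a $5$-cycle, so it has edges and is not complete.

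I do not expect a real obstacle here. The only delicate point is invoking the diameter bound correctly. As a cross-check on the signed side: for nonadjacent $u,v$ the unique $2$-path gives $w_2^\pm(u,v)=\pm1$, so $c=\pm1\neq 0$ is forced to be a constant. This sign information is not needed for the lemma, but it foreshadows the homogeneity argument for Theorem~\ref{structure5}.
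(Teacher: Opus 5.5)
Your proof is correct and is the paper's argument: the diameter bound of Proposition~\ref{structure6} gives nonadjacent vertices at least one common neighbor, the absence of $4$-cycles gives at most one, and triangle-freeness gives $\lambda=0$. One small point: the $K_{n,1}$ exception in that proposition concerns only the girth statement, not the diameter bound, so excluding $K_{n,1}$ is unnecessary, though harmless.
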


\begin{proof}
Suppose $u, v$ are nonadjacent vertices.  By Theorem \ref{structure6}, they have a common neighbor.  Girth 5 implies they cannot have a second common neighbor, so $\mu = 1$.
\end{proof}

\begin{lemma}\label{homoC5}
If $\Sigma$ is strongly regular with girth $5$, then every $C_5$ in $\Sigma$ is homogeneous.
\end{lemma}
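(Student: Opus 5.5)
The plan is to show that in a strongly regular signed graph of girth $5$ every $2$-path has the same sign. An odd cycle all of whose consecutive edge pairs have equal product must then be homogeneous.

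\textbf{Step 1: every $2$-path has sign $c$.} Since $\Sigma$ has girth $5$, there are no triangles. So for any $2$-path $uxv$ the endpoints $u,v$ are distinct and nonadjacent. By Lemma~\ref{srsg5} (which rests on Proposition~\ref{structure6}), two nonadjacent vertices have exactly one common neighbor. Hence $uxv$ is the unique $2$-path from $u$ to $v$, and $w_2^\pm(u,v)=\sigma(ux)\sigma(xv)$. Strong regularity gives $w_2^\pm(u,v)=c$ for every nonedge $uv$. Nonedges exist because a graph of girth $5$ is not complete. Therefore $c=\pm1$, and every $2$-path in $\Sigma$ has sign exactly $c$.

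\textbf{Step 2: apply this to a pentagon.} Let $C_5$ have consecutive edges $e_1,\dots,e_5$, with indices taken mod $5$. By Step~1, $\sigma(e_i)\sigma(e_{i+1})=c$ for every $i$. Comparing the products for the pairs $(e_i,e_{i+1})$ and $(e_{i+1},e_{i+2})$ gives $\sigma(e_i)=\sigma(e_{i+2})$. Since $5$ is odd, stepping by $2$ around the cycle visits every edge: $e_1,e_3,e_5,e_2,e_4$. So all five edges have the same sign and the $C_5$ is homogeneous. As a by-product, $c=\sigma(e_1)^2=+1$.

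There is no real obstacle here. The only point needing care is Step~1: it must be justified that every $2$-path joins nonadjacent vertices (no triangles), and that it is the only $2$-path between them ($\mu=1$ from Lemma~\ref{srsg5}). Given those two facts, the sign of the single path is forced to equal $c$.
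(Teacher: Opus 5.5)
Your proof is correct and follows essentially the same route as the paper. Both arguments first show that every $2$-path has the same sign $c=\pm1$, because girth $5$ makes the $2$-path between its nonadjacent endpoints unique, and then apply this to the pentagon; the paper finishes by writing $\sigma(C_5)=\sigma(e_1e_2)\sigma(e_3e_4)\sigma(e_5)=\sigma(e_5)$, while your $\sigma(e_i)=\sigma(e_{i+2})$ with $5$ odd is the same observation (and your by-product $c=+1$ is correct).
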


\begin{proof}
Girth 5 implies that $\mu=1$, so there is a unique 2-path between any two nonadjacent vertices $x, y$.  Hence, by strong regularity $w_2(x,y) = \pm1 = \alpha$, the same for every such pair $x,y$.  
Consider a $C_5$ with edges $e_1e_2e_3e_4e_5$ in cyclic order.  We compute 2-path signs in the circle: 
$\sigma(e_1e_2) = \sigma(e_3e_4) = \alpha$, so
$\sigma(C_5) = \sigma(e_1e_2)\sigma(e_3e_4)\sigma(e_5) = \sigma(e_5).$  This establishes that every edge in the $C_5$ has the same sign as the pentagon sign, so the $C_5$ is homogeneous.
\end{proof}

It follows that if two pentagons have a common edge, then every edge of both has the same sign.

\begin{lemma}\label{srg5}
In a strongly regular graph with girth $5$, every pair of edges belongs to a common pentagon.
\end{lemma}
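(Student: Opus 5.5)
The plan is to argue purely from the parameters in Lemma \ref{srsg5}: $\lambda=0$ (no triangles), $\mu=1$ (two nonadjacent vertices have exactly one common neighbour), and diameter $2$. We also use that the degree is $k\ge2$, since a connected graph of girth $5$ is not a tree. The statement should be read for pairs of \emph{incident} edges, and I will also cover disjoint edges joined by an edge. A literal reading for all pairs fails. In a pentagon, any two disjoint edges are joined by a third edge of the pentagon. So two disjoint edges $ab,cd$ with no edges between $\{a,b\}$ and $\{c,d\}$ (for example, edges at distance $2$ in the Petersen graph) cannot lie on a common pentagon. The incident-edge version is what the proof of Theorem \ref{structure5} actually needs.

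\textbf{Incident edges.} Let the edges be $ab$ and $ac$ with $b\neq c$. Then $b\not\sim c$ because $\lambda=0$. Pick a neighbour $x$ of $b$ with $x\neq a$, which exists since $k\ge2$. We have $x\not\sim a$, since otherwise $abx$ is a triangle. We also have $x\not\sim c$, since otherwise $a$ and $x$ are two common neighbours of the nonadjacent pair $b,c$, contradicting $\mu=1$. Finally $x\neq c$ because $c\not\sim b$. Hence $x$ and $c$ are distinct and nonadjacent, so they have a unique common neighbour $y$. This $y$ differs from $a$ because $a\not\sim x$, and from $b$ because $b\not\sim c$. The closed walk $b\,a\,c\,y\,x\,b$ therefore has five distinct vertices, and it is a pentagon containing both $ab$ and $ac$.

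\textbf{Disjoint edges joined by an edge.} Let the edges be $ab$ and $cd$ with, say, $b\sim c$. Then $a\not\sim d$, since otherwise $abcd$ is a $4$-cycle, and $a\neq d$. So $a,d$ have a unique common neighbour $y$. We get $y\neq b$ because $b\sim d$ would give the triangle $bcd$, and similarly $y\neq c$. Thus $a\,b\,c\,d\,y$ is a pentagon containing both edges.

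\textbf{Use in Theorem \ref{structure5}.} Combined with Lemma \ref{homoC5}, the incident-edge case shows that any two edges at a common vertex have the same sign. Connectivity then propagates this sign along paths, which makes $\Sigma$ homogeneous. The only delicate point is checking the distinctness of the five vertices in each construction, and each such check reduces to $\lambda=0$, $\mu=1$, or the absence of $4$-cycles, as done above.
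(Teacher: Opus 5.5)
Your proof is correct and follows the paper's own argument. For incident edges you extend by a neighbour of an end vertex and close up through the unique common neighbour; the paper phrases the same step via diameter $2$ and the exclusion of a quadrilateral. For disjoint edges joined by an edge you close $a\,b\,c\,d$ through the common neighbour of $a$ and $d$, as the paper does; its ``2-path $xuz$'' should read $xuw$.

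Your caveat is also right. The paper's proof treats only these two cases, and the literal ``every pair'' claim is false, as your Petersen example shows. The incident-edge case together with connectivity is exactly what the proof of Theorem~\ref{structure5} needs.
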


\begin{proof}
If the edges are adjacent, say they are $xy$ and $yz$.  Let $u \sim x$; then $uxyz$ is a path of length 3, which means there is either an edge $uz$ or a 2-path $uvz$ since the diameter is 2.  The former case gives a quadrilateral, so we must have the 2-path, hence a pentagon $uxyzvu$ that contains both edges.

If the edges are nonadjacent, say they are $xy$ and $zw$.  If there is an edge between them, such as $yz$, then there is no other edge between them, due to the girth assumption.  Diameter 2 implies a 2-path $xuz$, which forms a pentagon $xyzwux$ that contains the edges $xy$ and $zw$.
\end{proof}

\begin{proof}[Proof of Theorem \ref{structure5}]
By Lemmas \ref{srsg5} and \ref{srg5}, every pair of edges in $\Sigma$ lies in a pentagon.  By Lemma \ref{homoC5} the edges have the same sign.  That completes the proof of Theorem \ref{structure5}.
\end{proof}


\end{document}